\newcommand{\comments}[1]{}
\renewcommand{\leq}{\leqslant}
\renewcommand{\geq}{\geqslant}
\newcommand{\graphg}{\mathcal{G}}
\newtheorem{prop}{Proposition}
\newtheorem{thm}[prop]{Theorem}
\newtheorem{conj}{Conjecture}
\newtheorem{lem}[prop]{Lemma}
\theoremstyle{definition}
\begin{document}

\title{A note on the {W}eiss conjecture}

\author{Nick Gill}
\address{Department of Mathematics, The Open University, Walton Hall, Milton Keynes, MK7 6AA, UK}
\email{n.gill@open.ac.uk}

\begin{abstract}
Let $G$ be a finite group acting vertex-transitively on a graph. We show that bounding the order of a vertex stabilizer is equivalent to bounding the second singular value of a particular bipartite graph. This yields an alternative formulation of the Weiss Conjecture.
\end{abstract}

\maketitle

Throughout this note $G$ is a finite group acting vertex-transitively on a graph $\Gamma=(V,E)$ of valency $k$. We say that $G$ is {\it locally-}P, for some property P, if $G_v$ is P on $\Gamma(v)$. Here $v$ is a vertex of $\Gamma$, and $\Gamma(v)$ is the set of neighbours of $v$. With this notation we can state the Weiss Conjecture \cite{weiss}.

\begin{conj}\label{c: weiss}
{\rm (The Weiss Conjecture)} There exists a function $f: \mathbb{N}\to \mathbb{N}$ such that if $G$ is vertex-transitive and locally-primitive on a graph $\Gamma$ of valency $k$, then $|G_v|<f(k)$.
\end{conj}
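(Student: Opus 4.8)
The plan is to run the argument entirely through the spectral reformulation promised in the abstract. One first attaches to the pair $(G,\Gamma)$ a bipartite graph $B=B(G,\Gamma)$ — the natural candidates being the vertex--arc incidence graph of $\Gamma$ (each arc $(u,w)$ joined to its head and its tail) or, equivalently up to the data that matters, the coset graph of the amalgam $(G_v,G_{vw})$ with parts $G/G_v$ and $G/G_{vw}$ — and then one establishes the dictionary: the second singular value $\sigma_2(B)$ is bounded (in terms of $k$) if and only if $|G_v|$ is. Since $G$ is vertex-transitive, $|G_v|=|G|/|V|$, and the biadjacency operator of $B$ is $G$-equivariant; the block of this operator supported on a single vertex neighbourhood is built out of the permutation action $G_v^{\Gamma(v)}$ and the local adjacency pattern of $\Gamma$, which is precisely why the size of $G_v$ and the spectrum of $B$ are linked. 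Granting this dictionary, Conjecture~\ref{c: weiss} becomes a purely analytic assertion: there is $g\colon\mathbb{N}\to\mathbb{R}_{>0}$ such that whenever $G$ is locally-primitive of valency $k$ one has a spectral gap $\sigma_1(B)-\sigma_2(B)\ge g(k)$, with the exact shape of the equivalence being the content the paper supplies.

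With the dictionary in hand I would attack the conjecture in three moves. First, verify the two halves of the equivalence: a bound on $|G_v|$ yields a gap for $B$ by an averaging/pigeonhole argument over the finitely many local configurations that a bounded stabilizer permits, and conversely a gap for $B$ bounds $|G_v|$ by reading $|G_v|$ off a multiplicity or a normalised trace of the relevant block and estimating it when $\sigma_2(B)$ is controlled. Second, feed in local primitivity: a primitive group $G_v^{\Gamma(v)}$ has every nontrivial orbital graph connected, so the \emph{local} operator carries a gap depending on $k$ but not on $|G_v|$. Third — the decisive step — propagate this local gap to a global gap for $B$ by a closed-walk count on the universal cover of $\Gamma$ in the style of the trace method, showing that closed walks in $B$ cannot accumulate unless some $G_v$ is large.

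The main obstacle, and the reason Conjecture~\ref{c: weiss} remains open, lies in that third step. Local primitivity constrains $G_v^{\Gamma(v)}$ but says nothing directly about the kernel $G_v^{[1]}$ of the action on $\Gamma(v)$, and there exist ``thin'' amalgams $(G_v,G_{vw})$ in which $|G_v^{[1]}|$ grows while every local orbital graph retains a fixed spectral gap, so a naive trace estimate for $\sigma_2(B)$ degenerates exactly on the cases that matter. I expect that closing this gap will require either a classification-of-finite-simple-groups reduction of the local action to the almost simple and affine types — where the admissible amalgams, and hence $|G_v^{[1]}|$, are understood, as in the locally $2$-transitive case settled by Weiss and by Trofimov — or a genuinely new combinatorial bound on $\sigma_2(B)$ that is sensitive to the amalgam structure. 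What the reformulation buys is a clean analytic target for which expander-type machinery (eigenvalue interlacing, the expander mixing lemma) becomes available; it does not, by itself, dispose of the residual thin cases, which is where all the difficulty of the original conjecture has been relocated.
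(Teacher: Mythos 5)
The statement you are addressing is the Weiss Conjecture itself, which is open; the paper does not prove it and does not claim to. What the paper actually establishes (Theorem~\ref{t: weiss equivalence}, via Proposition~\ref{p: lambda2 implies weiss}) is only the \emph{equivalence} between bounding $|G_v|$ in terms of $k$ and bounding the second singular value $\lambda_2$ of one specific bipartite multigraph $\graphg$ --- the one with both parts equal to $V$ and with the number of edges between $x$ and $y$ equal to the number of $s\in S$ with $s(x)=y$, where $S$ is the Sabidussi connection set. Your ``dictionary'' is therefore a broadly correct reading of the paper's contribution, but it is the \emph{whole} of the paper's contribution: it relocates the conjecture, it does not resolve it. Note also that your candidate bipartite graphs (the vertex--arc incidence graph, or the coset graph of the amalgam $(G_v,G_{vw})$) are not the graph the paper uses, and for the paper's $\graphg$ one has $\lambda_1=k|G_v|$, so the ``spectral gap'' formulation $\sigma_1-\sigma_2\geq g(k)$ is not the right target either: the equivalence proved is between the absolute bounds $|G_v|<g(k)$ and $\lambda_2<f(k)$, not a gap between the first and second singular values.

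The genuine gap is your third step. You assert that local primitivity gives the local operator a spectral gap depending only on $k$, and that this can be ``propagated'' to a global bound on the second singular value by a trace-method walk count. No such propagation argument is supplied, and you yourself identify exactly why a naive one fails: local primitivity constrains $G_v^{\Gamma(v)}$ but places no a priori bound on the kernel $G_v^{[1]}$ of the action on $\Gamma(v)$, and the entire content of the Weiss Conjecture is the claim that $G_v^{[1]}$ cannot be large when the local action is primitive. Moreover, since by the paper's own equivalence bounding $\lambda_2$ of $\graphg$ is \emph{the same problem} as bounding $|G_v|$, the spectral reformulation gives no independent purchase on that step: any proof that $\lambda_2<f(k)$ must already contain a proof that $|G_v|$ is bounded. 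What you have written is a (largely reasonable) research programme with its decisive step missing, not a proof; the conjecture remains open after your argument exactly where it was open before it.
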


A stronger version of this conjecture, in which `primitive' is replaced by `semiprimitive' has been recently proposed \cite{psv}. (A transitive permutation group is said to be {\it semiprimitive} if each of its normal subgroups is either transitive or semiregular.)

Our aim in this note is to connect the order of $G_v$ to the singular value decomposition of the adjacency matrix of a particular bipartite graph $\graphg$. This connection yields an alternative form of the Weiss conjecture (and its variants). Our main result is the following (we write $\lambda_2$ for the second largest singular value of the adjacency matrix of $\graphg$):

\begin{thm}\label{t: weiss equivalence}
For every function $f:\mathbb{N}\to \mathbb{N}$, there is a function $g: \mathbb{N}\to\mathbb{N}$ such that if $G$ is a finite group acting vertex-transitively on a graph $\Gamma=(V,E)$ of valency $k$ and $\lambda_2<f(k)$, then $|G_v|<g(k)$.

Conversely, for every function $g:\mathbb{N}\to \mathbb{N}$, there is a function $f: \mathbb{N}\to\mathbb{N}$ such that if $G$ is a finite group acting vertex-transitively on a graph $\Gamma=(V,E)$ of valency $k$ and $|G_v|<g(k)$, then $\lambda_2<f(k)$.
\end{thm}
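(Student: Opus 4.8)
The plan is to reduce the theorem to a two-sided estimate $c(k)\,|G_v|^{1/2}\le\lambda_2\le C(k)\,|G_v|^{1/2}$ valid whenever $|V|$ is not too small, with the small-$|V|$ cases handled separately; the two implications then follow at once by composing functions. So the first task is to pin down the combinatorial data of $\graphg$. Taking the natural construction --- one part indexed by $V$, the other by $G$, with $u$ joined to $g$ precisely when $g$ moves a fixed base vertex $v_0$ into the closed neighbourhood $N[u]=\{u\}\cup\Gamma(u)$ --- vertex-transitivity gives that $\graphg$ is biregular, of degree $(k+1)|G_v|$ on the $V$-side and $k+1$ on the $G$-side, so $\lambda_1=(k+1)|G_v|^{1/2}$ and $\graphg$ has edge density $(k+1)/|V|$. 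In fact one gets more: writing $M$ for the biadjacency matrix of $\graphg$ and $A$ for that of $\Gamma$, counting group elements with a prescribed image shows $MM^{T}=|G_v|\,(A+I)^2$, which diagonalises the entire singular spectrum of $\graphg$ --- the squares of its singular values are $|G_v|(\mu+1)^2$ as $\mu$ runs over the spectrum of $\Gamma$. (If the construction uses open neighbourhoods or a larger ball the polynomial $(x+1)^2$ is replaced by another nonnegative one, and the exponent $1/2$ by $1$ if $\graphg$ is instead built from two copies of $G$; none of this affects the argument.)

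The converse implication is then immediate from $\lambda_2\le\lambda_1=(k+1)|G_v|^{1/2}$: if $|G_v|<g(k)$ then $\lambda_2<(k+1)\sqrt{g(k)}=:f(k)$. For the main implication I argue contrapositively: from $\lambda_2=|G_v|^{1/2}\max(\mu_2+1,\,|\mu_{\min}+1|)$ (the two quantities being the contributions of the largest and smallest eigenvalues of $\Gamma$ other than $k$), together with the elementary bound $\mu_2\ge -k/(|V|-1)$ --- so $\mu_2+1\ge\tfrac12$ as soon as $|V|\ge 2k+1$ --- I get $\lambda_2\ge\tfrac12|G_v|^{1/2}$ whenever $|V|\ge 2k+1$, whence $\lambda_2<f(k)$ forces $|G_v|<4f(k)^2$. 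If one prefers not to invoke the identity, the same lower bound drops out of the bipartite expander mixing lemma: with $S=N[v_0]$ and $T$ the coset of $G_v$ fixing $v_0$, every element of $T$ fixes $v_0$ and so is joined in $\graphg$ to every vertex of $S$, giving $e(S,T)=|S||T|=(k+1)|G_v|$, and
\[
\lambda_2 \ge \frac{e(S,T)-\dfrac{k+1}{|V|}|S||T|}{\sqrt{|S||T|}} = \sqrt{(k+1)|G_v|}\,\Bigl(1-\frac{k+1}{|V|}\Bigr),
\]
which is $\ge\tfrac12|G_v|^{1/2}$ once $|V|\ge 2(k+1)$.

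The one genuine obstacle is uniformity over all vertex-transitive $\Gamma$, and it comes down to the hypothesis ``$|V|$ not too small'', which cannot be dropped: for $\Gamma=K_{k+1}$ every non-trivial eigenvalue equals $-1$, so $\lambda_2=0$ while $|G_v|$ can be as large as $k!$ --- no contradiction, but a reminder that an exact formula relating $\lambda_2$ and $|G_v|$ is impossible, which is exactly why the theorem is phrased with unspecified functions. The fix is cheap: if $|V|\le 2k$ then $G\hookrightarrow\mathrm{Sym}(V)$ gives $|G_v|=|G|/|V|\le(2k)!$, a bound in $k$ alone, so such $\Gamma$ are set aside and one takes $g(k):=\max\!\bigl(4f(k)^2,(2k)!\bigr)+1$. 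Disconnected or bipartite $\Gamma$, or $\Gamma$ on which $G$ fails to be locally transitive, need no special treatment --- the identity above holds verbatim, and the factor $\max(\mu_2+1,|\mu_{\min}+1|)$, which lies in $[0,k+1]$, is bounded below by $\tfrac12$ precisely under the size hypothesis already imposed. The real content of the proof is thus the choice of $\graphg$ and the bookkeeping that welds these pieces into a single pair $(f,g)$; each implication, taken on its own, is short.
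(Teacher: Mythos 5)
Your overall architecture is the right one and matches the paper's: a converse via $\lambda_2\le\lambda_1$ with $\lambda_1$ computed from biregularity, a forward direction via a mixing/discrepancy lower bound on $\lambda_2$ valid once $|V|\ge 2k$ (or so), and the remaining graphs disposed of by $|G_v|\le|G|\le(2k)!$. The difficulty is that your primary computation concerns a graph that is not the paper's $\graphg$. In the paper, \emph{both} parts of $\graphg$ are copies of $V$, and the multiplicity of the edge $\{x,y\}$ is the number of elements $s$ of the Sabidussi connection set $S$ (a union of $G_v$-double cosets with $|S|=k|G_v|$ and $S(\{v\})=\Gamma(v)$) satisfying $s(x)=y$; equivalently $\mathcal{A}=\sum_{s\in S}P_s$ is the matrix of $f\mapsto\chi_S\ast f$. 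This graph is $k|G_v|$-regular on both sides, so $\lambda_1=k|G_v|$, not $(k+1)|G_v|^{1/2}$, and your identity $MM^{T}=|G_v|(A+I)^2$ has no analogue: here $\mathcal{A}\mathcal{A}^{T}=\sum_{g\in G}(\chi_S\ast\chi_{S^{-1}})(g)P_g$, which for a general union of double cosets is not a polynomial in the adjacency matrix of $\Gamma$ (the multiplicities $|S\cap yG_vx^{-1}|$ need not be constant across edges). So the parenthetical ``none of this affects the argument'' is carrying real weight: the clean diagonalisation, the exponent $1/2$, and the bookkeeping through $\max(\mu_2+1,|\mu_{\min}+1|)$ all have to be abandoned for the actual object of the theorem.

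What saves you is your second, identity-free argument. The expander-mixing estimate transfers essentially verbatim to the paper's $\graphg$: take $T=\{v\}$ in one part and $S'=\Gamma(v)$ in the other; then $e(T,S')=|S|=k|G_v|$ while the density prediction is $k^2|G_v|/|V|$, giving $\lambda_2\ge\sqrt{k}\,|G_v|(1-k/|V|)\ge\tfrac12\sqrt{k}\,|G_v|$ once $|V|\ge 2k$, hence $|G_v|\le 2\lambda_2/\sqrt{k}$. This is, in essence, exactly the paper's Proposition~\ref{p: lambda2 implies weiss}: there the lower bound $1/k\le\Vert p_S\ast p_v\Vert^2$ comes from Cauchy--Schwarz applied to a distribution supported on the $k$-element set $\Gamma(v)$, and the upper bound from applying Lemma~\ref{l: svd alpha}(2) to the mean-zero part of $p_v$. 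So: correct strategy, correct fallback argument and correct treatment of the small-$|V|$ degeneracy, but to prove the stated theorem you must replace your $\graphg$ by the paper's and rerun only the mixing argument, noting that the resulting scaling is $|G_v|\lesssim\lambda_2/\sqrt{k}$ --- linear in $|G_v|$, not $|G_v|^{1/2}\lesssim\lambda_2$.
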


All of the necessary definitions pertaining to Theorem~\ref{t: weiss equivalence} are discussed below. In particular the bipartite graph $\graphg$ is defined in \S1, and the singular value decomposition of its adjacency matrix is discussed in \S2.

Theorem~\ref{t: weiss equivalence} implies that, to any family of vertex-transitive graphs with bounded vertex stabilizer, we have an associated family of bipartite graphs with bounded second singular value, and vice versa. Proving the Weiss Conjecture (or one of its variants) is, therefore, equivalent to bounding the second singular value for a particular family of bipartite graphs.

Gowers remarks that singular values are the `correct analogue of eigenvalues for bipartite graphs' (see the preamble to Lemma 2.7 in \cite{gowers}).\footnote{The mathematics behind this remark is set down in \cite{bn}. An elementary first observation is that the eigenvalues of the natural adjacency matrix of a bipartite graph may be negative, in contrast to the eigenvalues of the (symmetric) adjacency matrix of a graph. This pathology is remedied by studying the singular values as we shall see.} Thus bounding the second singular value of a bipartite graph is analogous to bounding the second eigenvalue of a graph; the latter task is a celebrated and much studied problem due to its connection to the expansion properties of a graph (see, for instance, \cite{lubotz}).

The fact that the Weiss Conjecture has connections to expansion has already been recognised \cite{ppss} - we hope that this note adds to the evidence that it is a connection warranting a good deal more investigation. 

\subsection{The associated bipartite graph $\graphg$}\label{s: bipartite}

Our first job is to describe $\graphg$, and for this we need the concept of a {\it coset graph}. Let $H$ be a subgroup of $G$ and let $A$ be a union of double cosets of $H$ in $G$ such that $A=A^{-1}$. Define the coset graph ${\mathrm Cos}(G,H,A)$ as the graph with vertex set the left cosets of $H$ in $G$ and with edges the pairs $\{xH,yH\}$ such that $Hx^{-1}yH\subset A$. Observe that the action of $G$ by left multiplication on the set of left cosets of $H$ induces a vertex-transitive automorphism group of ${\mathrm Cos}(G,H,A)$.

The following result is due to Sabidussi \cite{sabidussi}.

\begin{prop}
Let $\Gamma=(V,E)$ be a $G$-vertex-transitive graph and $v$ a vertex of $\Gamma$. Then there exists a union $S$ of $G_v$-double cosets such that $S=S^{-1}$,  $\Gamma \cong {\mathrm Cos}(G,G_v,S)$ and the action of $G$ on $V$is  equivalent to the action of $G$ by left multiplication on the left cosets of $G_v$ in $G$.
\end{prop}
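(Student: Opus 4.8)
The plan is to realise the vertex set $V$ as the coset space $G/G_v$ via the orbit--stabilizer correspondence, and then to recover the edge set of $\Gamma$ as a connection set on that coset space. Write $H=G_v$. First I would consider the map $\psi\colon G/H\to V$ sending the left coset $gH$ to $g\cdot v$; this is well defined and injective precisely because $H$ is the full stabilizer of $v$, it is surjective because $G$ is vertex-transitive, and it intertwines the left-multiplication action of $G$ on $G/H$ with the given action of $G$ on $V$. This already yields the asserted equivalence of permutation actions, so what remains is to choose $S$ and to check that $\psi$ is a graph isomorphism from ${\mathrm Cos}(G,H,S)$ onto $\Gamma$.

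For the connection set I would take $S=\{g\in G : \{v,\,g\cdot v\}\in E\}$. Two properties need checking. First, $S$ is a union of $(H,H)$-double cosets: if $\{v,g\cdot v\}\in E$ and $h_1,h_2\in H$, then $h_1gh_2\cdot v=h_1g\cdot v$ since $h_2$ fixes $v$, and applying the automorphism $h_1$ (which also fixes $v$) to the edge $\{v,g\cdot v\}$ produces the edge $\{v,\,h_1g\cdot v\}$; hence $h_1gh_2\in S$. Second, $S=S^{-1}$: applying the automorphism $g^{-1}$ to the edge $\{v,g\cdot v\}$ produces the edge $\{g^{-1}\cdot v,\,v\}$, so $g^{-1}\in S$.

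Finally I would verify that $\psi$ matches up the two edge relations. Unwinding definitions, $\{xH,yH\}$ is an edge of ${\mathrm Cos}(G,H,S)$ exactly when $Hx^{-1}yH\subseteq S$, whereas $\{x\cdot v,\,y\cdot v\}\in E$ exactly when $\{v,\,x^{-1}y\cdot v\}\in E$ (apply the automorphism $x^{-1}$), i.e. exactly when $x^{-1}y\in S$; and because $S$ is a union of double cosets these two conditions are equivalent. Hence $\psi$ is the desired isomorphism. The argument is really just bookkeeping, and the only point that deserves a moment's care is reconciling the double-coset membership condition built into the definition of the coset graph with the single-element condition $x^{-1}y\in S$ — which is exactly what the hypothesis that $S$ be a union of $H$-double cosets is there to guarantee — together with keeping the left/right coset conventions and the direction of the $G$-action consistent throughout.
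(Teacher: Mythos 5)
Your proof is correct and complete: the orbit--stabilizer bijection $gH\mapsto g\cdot v$, the connection set $S=\{g : \{v,g\cdot v\}\in E\}$, and the verification that $S$ is a symmetric union of $G_v$-double cosets matching the edge relation is exactly the standard (Sabidussi) argument. The paper itself gives no proof, citing Sabidussi's 1964 article instead, so there is nothing to contrast your approach with.
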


Note that $G$ is locally-transitive if and only if $S$ is equal to a single double coset of $G_v$. From here on we fix $v$ to be a vertex in $V$ and we set $S$ to be the union of double cosets of $G_v$ in $G$ such that $\Gamma \cong {\mathrm Cos}(G,G_v,S)$. Observe that $S(\{v\}) = \Gamma(v)$.

We are ready to define the regular bipartite graph $\graphg$. We define the two vertex sets, $X$ and $Y$, to be copies of $V$. The number of edges between $x\in X$ and $y\in Y$ is defined to equal the number of elements $s\in S$ such that $s(x)=y$. Note that $\graphg$ is a multigraph and may contain loops.

\subsection{The singular value decomposition}\label{s: svd}

For $V$ and $W$ two real inner product spaces, we define a linear map
$$w\otimes v: V\to W, x \mapsto \langle x,v\rangle w.$$
With this notation we have the following result \cite[Theorem 2.6]{gowers}.

\begin{prop}\label{p: svd}
Let $\alpha:V\to W$ be a linear map. Then $\alpha$ has a decomposition of the form $\sum_{i=1}^k \lambda_i w_i\otimes v_i$, where the sequences $(v_i)$ and $(w_i)$ are orthonormal in $V$ and $W$, respectively, each $\lambda_i$ is non-negative, and $k$ is the smaller of $\dim V$ and $\dim W$.
\end{prop}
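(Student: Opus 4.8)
The plan is to reduce the statement to the spectral theorem for self-adjoint operators, applied to the positive semidefinite operator $\alpha^*\alpha$ on $V$. First I would observe that, since $V$ and $W$ are finite-dimensional real inner product spaces, $\alpha$ admits an adjoint $\alpha^*:W\to V$ characterised by $\langle \alpha x, y\rangle = \langle x, \alpha^* y\rangle$ for all $x\in V$ and $y\in W$. The composite $\beta := \alpha^*\alpha:V\to V$ is then self-adjoint, since $\langle \beta x, x'\rangle = \langle \alpha x, \alpha x'\rangle = \langle x, \beta x'\rangle$, and it is positive semidefinite because $\langle \beta x, x\rangle = \|\alpha x\|^2 \geq 0$ for every $x$.

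Next I would invoke the spectral theorem to obtain an orthonormal basis $v_1,\dots,v_n$ of $V$ (where $n=\dim V$) consisting of eigenvectors of $\beta$, say $\beta v_i = \mu_i v_i$ with each $\mu_i\geq 0$. Writing $\mu_i = \lambda_i^2$ with $\lambda_i\geq 0$, and ordering the indices so that the nonzero eigenvalues come first, I would define $w_i := \lambda_i^{-1}\alpha v_i\in W$ for each $i$ with $\lambda_i>0$. The key computation is $\langle \alpha v_i, \alpha v_j\rangle = \langle \beta v_i, v_j\rangle = \mu_i\delta_{ij}$, which shows at once that the vectors $w_i$ (over nonzero $\lambda_i$) form an orthonormal family in $W$; taking $i=j$ in the same identity gives $\|\alpha v_i\|^2 = \lambda_i^2$, so that $\alpha v_i = 0$ whenever $\lambda_i = 0$. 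To verify the asserted formula I would then expand an arbitrary $x\in V$ as $x=\sum_i \langle x, v_i\rangle v_i$ and compute $\alpha x = \sum_i \langle x, v_i\rangle \alpha v_i = \sum_i \lambda_i \langle x, v_i\rangle w_i = \big(\sum_i \lambda_i\, w_i\otimes v_i\big)(x)$, where terms with $\lambda_i=0$ simply drop out.

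Finally I would attend to the bookkeeping needed to make the sum run over exactly $k=\min(\dim V,\dim W)$ indices. Let $r$ denote the number of nonzero $\lambda_i$; since the associated $w_i$ are orthonormal in $W$ we have $r\leq \dim W$, and obviously $r\leq \dim V$, so $r\leq k$. If $\dim V\leq \dim W$ then $k=n$, and I retain all $n$ eigenvectors, extending $w_1,\dots,w_r$ to an orthonormal family $w_1,\dots,w_n$ in $W$ (possible because $n\leq\dim W$) and assigning the value $\lambda_i=0$ to the padded indices. If instead $\dim V>\dim W$ then $k=\dim W$, and I keep only the first $k$ eigenvectors $v_1,\dots,v_k$, which include all $r$ with nonzero singular value, extending the corresponding $w_i$ to an orthonormal basis of $W$.

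The one genuinely delicate point, and the step I would treat most carefully, is precisely this dimension count: one must check that the rank bound $r\leq\dim W$ forces every discarded eigenvector to lie in $\ker\alpha$, so that truncating the index set in the case $\dim V>\dim W$ (and padding it with zero singular values in the case $\dim V\leq\dim W$) leaves $\alpha$ unchanged. The positivity, the orthonormality of the $w_i$, and the reconstruction identity are otherwise routine consequences of the spectral theorem, so the argument should be short once the extremal-rank observation is in place. As an alternative to the spectral theorem I could instead choose $v_1$ to maximise $\|\alpha v\|$ over the unit sphere of $V$ (using compactness) and induct on orthogonal complements, but the route through $\alpha^*\alpha$ is cleaner and isolates the dimension count as the sole subtlety.
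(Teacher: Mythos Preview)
Your argument is correct and is the standard derivation of the singular value decomposition via the spectral theorem for the positive semidefinite operator $\alpha^*\alpha$; the dimension bookkeeping you flag is handled properly. Note, however, that the paper does not supply its own proof of this proposition: it is quoted as \cite[Theorem~2.6]{gowers}, so there is no in-paper argument to compare against. Your proof would serve perfectly well as a self-contained justification, and indeed the route through $\alpha^*\alpha$ is essentially the one Gowers takes.
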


The decomposition described in the proposition is called the {\it singular value decomposition}, and the values $\lambda_1, \lambda_2,\dots$ are the {\it singular values} of $\alpha$. In what follows we always assume that the singular values are written in non-increasing order: $\lambda_1\geq \lambda_2 \geq \cdots $.

Now write $\mathcal{A}$ for the adjacency matrix of $\graphg$ \emph{as a bipartite graph}, i.e. the rows of $\mathcal{A}$ are indexed by $X$, the columns by $Y$ and, for $x\in X, y\in Y$, the entry $\mathcal{A}(x,y)$ is equal to the number of edges between $x$ and $y$. Then $\mathcal{A}$ can be thought of as a matrix for a linear map $\alpha: \mathbb{R}^{X} \to \mathbb{R}^{Y}$ and, as such, we may consider its singular value decomposition. From here on the variables $\lambda_1, \lambda_2, \dots$ will denote the singular values of this particular map.

The next result gives information about this decomposition. (The result is \cite[Lemma 3.3]{gillqr}, although some of the statements must be extracted from the proof.)

\begin{lem}\label{l: svd alpha}
\begin{enumerate}
 \item $\lambda_1 = t\sqrt{|V_1||V_2|}$ where $t$ is the real number such that every vertex in $V_1$ has degree $p|V_2|$.
 \item If $f$ is a function that sums to zero, then $\|\alpha(f)\|/ \|f\| \leq \lambda_2$.
\end{enumerate}
 \end{lem}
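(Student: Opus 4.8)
The plan is to write down, by hand, a right/left singular vector pair realizing the value claimed in part (1), then certify that this pair is extremal via an upper bound on the operator norm of $\alpha$, and finally deduce part (2) by expanding $\alpha(f)$ in a singular basis. Throughout, $V_1$ and $V_2$ denote the two vertex classes of the biregular bipartite graph in question (for us, the classes $X$ and $Y$ of $\graphg$): writing $t|V_2|$ for the common degree of the vertices of $V_1$, a double count gives $t|V_1|$ for the common degree of the vertices of $V_2$.

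For part (1), let $u=\mathbf 1_{V_1}/\sqrt{|V_1|}$ and $u'=\mathbf 1_{V_2}/\sqrt{|V_2|}$ be the normalized all-ones vectors. The $y$-coordinate of $\alpha(\mathbf 1_{V_1})$ is the column sum $\sum_{x}\mathcal A(x,y)$, namely the degree $t|V_1|$ of $y$, so $\alpha(u)=t\sqrt{|V_1||V_2|}\,u'$; since $u'$ is a unit vector this gives $\lambda_1=\|\alpha\|\ge t\sqrt{|V_1||V_2|}$ (the identification $\lambda_1=\|\alpha\|$ being immediate from Proposition~\ref{p: svd} together with Bessel's inequality). For the reverse inequality I would estimate, for a unit vector $f\in\mathbb R^{V_1}$,
$$\|\alpha(f)\|^2=\sum_{y}\Big(\sum_{x}\mathcal A(x,y)f(x)\Big)^2\le\sum_{y}\Big(\sum_{x}\mathcal A(x,y)\Big)\Big(\sum_{x}\mathcal A(x,y)f(x)^2\Big)$$
by Cauchy--Schwarz in each coordinate; inserting the column sums $t|V_1|$ and then interchanging the order of summation to bring in the row sums $t|V_2|$, the right-hand side collapses to $t^2|V_1||V_2|$. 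Hence $\lambda_1\le t\sqrt{|V_1||V_2|}$, which with the previous bound gives (1).

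For part (2), fix a singular value decomposition $\alpha=\sum_{i=1}^{k}\lambda_i\,w_i\otimes v_i$ as in Proposition~\ref{p: svd}. From the definition of $w_i\otimes v_i$ and the orthonormality of the $w_i$, $\|\alpha(f)\|^2=\sum_{i}\lambda_i^2\langle f,v_i\rangle^2$, while the hypothesis that $f$ sums to zero is exactly $\langle f,u\rangle=0$. The delicate point is that Proposition~\ref{p: svd} supplies only \emph{some} such decomposition, so one may not simply assume $v_1=u$; instead I would note that any unit vector attaining $\|\alpha\|$ lies in $\mathrm{span}\{v_i:\lambda_i=\lambda_1\}$ (equality throughout $\sum_i\lambda_i^2\langle\cdot,v_i\rangle^2\le\lambda_1^2\sum_i\langle\cdot,v_i\rangle^2\le\lambda_1^2$ pins down both the support and the norm). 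When $\lambda_1>\lambda_2$ this subspace is the line $\mathbb R v_1$, so $\langle f,v_1\rangle=0$ and $\|\alpha(f)\|^2=\sum_{i\ge2}\lambda_i^2\langle f,v_i\rangle^2\le\lambda_2^2\|f\|^2$; when $\lambda_1=\lambda_2$ the desired inequality is weaker than $\|\alpha(f)\|\le\lambda_1\|f\|=\|\alpha\|\,\|f\|$, already in hand. Either way (2) follows. The one real obstacle is precisely this non-uniqueness of the decomposition: one must confirm that the explicit all-ones vector sits in the top singular subspace before projecting it away — the rest is routine linear algebra once the biregularity of $\graphg$ has been recorded.
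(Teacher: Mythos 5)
Your argument is correct, and it is worth noting that the paper itself offers no proof of this lemma at all: it simply cites \cite[Lemma 3.3]{gillqr} and remarks that some statements must be extracted from the proof there. So you have supplied a self-contained justification where the paper defers to an external reference. Your route is the standard one and it works: the explicit unit vectors $u=\mathbf 1_{V_1}/\sqrt{|V_1|}$, $u'=\mathbf 1_{V_2}/\sqrt{|V_2|}$ give the lower bound $\|\alpha\|\ge t\sqrt{|V_1||V_2|}$; the coordinatewise Cauchy--Schwarz estimate combined with the row and column sums gives the matching upper bound; the identification $\lambda_1=\|\alpha\|$ via Bessel is right; and in part (2) you correctly handle the only delicate point, namely that Proposition~\ref{p: svd} does not canonically single out $v_1$, by showing that any unit vector attaining the operator norm must lie in the top singular subspace (equality in both $\sum_i\lambda_i^2\langle\cdot,v_i\rangle^2\le\lambda_1^2\sum_i\langle\cdot,v_i\rangle^2$ and Bessel), and by disposing of the degenerate case $\lambda_1=\lambda_2$ separately.

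One small imprecision to fix: a double count does \emph{not} give that the vertices of $V_2$ have a common degree; it only gives that their \emph{average} degree is $t|V_1|$. Indeed, part (1) as literally stated (regularity assumed only on the $V_1$ side) is false in general --- take $V_1=\{a,b\}$, $V_2=\{c,d\}$ with edges $ac$ and $bc$: then $t=1/2$ but $\lambda_1=\sqrt2\ne t\sqrt{|V_1||V_2|}=1$. Your upper-bound computation genuinely uses that every column sum equals $t|V_1|$, so biregularity must be verified, not derived from the double count. For the graph $\graphg$ of the paper this is easy but should be said: every $y\in Y$ has degree $\sum_{x}|\{s\in S: s(x)=y\}|=|S|$, because each $s\in S$ is a permutation of $V$ and hence contributes exactly one preimage of $y$; since $|X|=|Y|=|V|$, both sides are $|S|$-regular and your argument goes through verbatim. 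You clearly had this in mind (you call $\graphg$ biregular and flag the point at the end), so this is a matter of stating the justification correctly rather than a gap in the proof.
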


Note that the only norm used in this note is the $\ell^2$-norm.

\subsection{Convolution}

Consider two functions $\mu:G\to\mathbb{R}$ and $\nu:V\to\mathbb{R}$. We define the \emph{convolution} of $\mu$ and $\nu$ to be
\begin{equation}\label{e: convolution definition}
\mu\ast \nu: V\to \mathbb{R}, \, \, v\mapsto \sum\limits_{g\in G} \mu(g)\nu(g^{-1}v).
\end{equation}

In the special case where $\mu = \chi_S$, the characteristic function of the set $S$ defined above, $\chi_S \ast \nu$ takes on a particularly interesting form:
\begin{equation}\label{e: alpha 2}
(\chi_S\ast f)(v)  = \sum\limits_{g\in G} \chi_S(g)f(g^{-1}v) = \sum\limits_{w\in \Omega} \mathcal{A}(v,w)f(v).
\end{equation}

Here, as before, $\mathcal{A}$ is the adjacency matrix of the bipartite graph $\mathcal{G}$. Equation~\eqref{e: alpha 2} implies that the linear map $\alpha: \mathbb{R}^{X} \to \mathbb{R}^{Y}$, for which $\mathcal{A}$ is a matrix, is given by $\alpha(f)=\chi_S\ast f$. This form is particularly convenient, as it allows us to use the following easy identities \cite[Lemma 2.3]{gillqr}.

\begin{lem}\label{l: 32}
Let $f$ be a function on $V$ that sums to $0$, $p$ a probability distribution over $V$, $q$ a probability distribution over $G$, and $U$ the uniform probability distribution over $V$. Then
\begin{enumerate}
\item $\Vert f+U\Vert^2 = \Vert f \Vert^2 + \frac{1}{|\Omega|}.$
 \item $\Vert p-U\Vert^2 = \Vert p \Vert^2 - \frac{1}{|\Omega|}.$
\item $\Vert q\ast (p\pm U)\Vert  = \Vert q\ast p\pm U\Vert.$
\item For $k$ a real number, $\Vert k p \Vert = k\Vert p\Vert.$
\end{enumerate}
\end{lem}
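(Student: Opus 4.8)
The plan is to establish each identity directly from the definition of the $\ell^2$-norm, since all four follow from the defining properties of $f$, $p$, $q$ and $U$. Identities (1), (2) and (4) are purely formal manipulations of inner products, while (3) reduces to a single structural observation about convolution. Throughout I would use $|\Omega|=|V|$, write $\langle\cdot,\cdot\rangle$ for the standard inner product on $\mathbb{R}^V$, and record at the outset the three basic quantities $\Vert U\Vert^2=\sum_v U(v)^2 = |V|\cdot|V|^{-2}=1/|\Omega|$, $\langle f,U\rangle = |V|^{-1}\sum_v f(v)=0$ (since $f$ sums to $0$), and $\langle p,U\rangle = |V|^{-1}\sum_v p(v)=1/|\Omega|$ (since $p$ sums to $1$). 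Everything else is assembled from these.

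For (1) and (2) I would expand the squared norm by bilinearity. For (1), $\Vert f+U\Vert^2 = \Vert f\Vert^2 + 2\langle f,U\rangle + \Vert U\Vert^2$; substituting $\langle f,U\rangle=0$ and $\Vert U\Vert^2=1/|\Omega|$ gives the claim. For (2), $\Vert p-U\Vert^2 = \Vert p\Vert^2 - 2\langle p,U\rangle + \Vert U\Vert^2$; here $\langle p,U\rangle=1/|\Omega|$ and $\Vert U\Vert^2=1/|\Omega|$, so the last two terms collapse to $-1/|\Omega|$, yielding the claim. Identity (4) is just homogeneity of the norm: $\Vert kp\Vert^2 = \sum_v k^2p(v)^2 = k^2\Vert p\Vert^2$, so $\Vert kp\Vert=|k|\,\Vert p\Vert$, which is the stated identity for the nonnegative scalars $k$ that arise in the applications.

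Identity (3) is the only one with genuine content, and it is where I would concentrate. The key observation is that the uniform distribution $U$ is a fixed point of convolution by any probability distribution $q$ on $G$. Indeed, from \eqref{e: convolution definition}, $(q\ast U)(v) = \sum_{g\in G} q(g)U(g^{-1}v) = |V|^{-1}\sum_{g\in G} q(g) = |V|^{-1} = U(v)$, where the middle equality uses that $U$ is constant on $V$ (equivalently, that each $g$ permutes $V$, so $U(g^{-1}v)=U(v)$) together with $\sum_g q(g)=1$. Combining this with the linearity of $\mu\ast(\cdot)$ in its second argument gives the pointwise identity $q\ast(p\pm U) = q\ast p \pm q\ast U = q\ast p \pm U$ of functions on $V$; taking $\ell^2$-norms of both sides then yields (3). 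The main, and essentially only, obstacle is recognising that $q\ast U=U$; once this $G$-invariance of the uniform measure is in hand, the remaining steps of all four parts are routine.
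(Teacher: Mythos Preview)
Your proposal is correct. The paper itself does not give a proof of this lemma: it simply labels the four statements ``easy identities'' and cites \cite[Lemma~2.3]{gillqr}. Your direct verification---computing $\langle f,U\rangle$, $\langle p,U\rangle$, and $\Vert U\Vert^2$ from the defining properties, and observing that $q\ast U=U$ by $G$-invariance of the uniform distribution---is exactly the expected argument and supplies the omitted details.
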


\subsection{The proof}

Theorem~\ref{t: weiss equivalence} will follow from the next result which shows that, provided $k$ is not too large compared to $|V|$, the order of $G_v$ is bounded in terms of $\lambda_2$ and $k$.

\begin{prop}\label{p: lambda2 implies weiss}
Either $|G_v|<\frac{\sqrt{2}\lambda_2}{k}$ or $|V|< 2k$.
\end{prop}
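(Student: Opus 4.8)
The plan is to exhibit, for a fixed vertex $v$, a test function on $V$ that sums to zero and whose image under $\alpha$ is as large as possible; Lemma~\ref{l: svd alpha}(2) then converts this into a lower bound for $\lambda_2$, which is compared against $\lambda_1$. First I would record that $\lambda_1 = |G_v|k$. Since $S$ is the disjoint union of the $G_v$-double cosets corresponding to the $G_v$-orbits on $\Gamma(v)$, and each double coset $G_v s G_v$ splits as a disjoint union of $|\mathcal{O}|$ left cosets of $G_v$ (where $\mathcal{O}$ is the associated orbit, corresponding under $gsG_v\leftrightarrow gsv$ to a vertex of $\Gamma(v)$), we obtain $|S| = |G_v|\cdot|\Gamma(v)| = |G_v|k$. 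Hence every vertex of $\graphg$ has degree $|S|$, and Lemma~\ref{l: svd alpha}(1) gives $\lambda_1 = |S| = |G_v|k$. In particular $|G_v| = \lambda_1/k$, so the desired inequality $|G_v| < \sqrt{2}\,\lambda_2/k$ is equivalent to $\lambda_1 < \sqrt{2}\,\lambda_2$.

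The heart of the argument is the identity $\alpha(\delta_v) = |G_v|\,\mathbf{1}_{\Gamma(v)}$, where $\delta_v$ is the point mass at $v$. Indeed, by \eqref{e: alpha 2}, $\alpha(\delta_v)(u) = |\{g\in S : gv = u\}|$; this vanishes unless $u\in S(v) = \Gamma(v)$, while for $u\in\Gamma(v)$ the set $\{g\in G: gv=u\}$ is a single left coset of $G_v$, which is one of the left cosets comprising $S$, so the count is exactly $|G_v|$. Now take $f = \delta_v - U$, which sums to zero. Using Lemma~\ref{l: 32}(3),(4) we get $\|\alpha(f)\| = \big\| |G_v|\,\mathbf{1}_{\Gamma(v)} - |S|\,U\big\| = |G_v|k\,\|q - U\|$, where $q$ is the uniform distribution on $\Gamma(v)$ and $\mathbf{1}_{\Gamma(v)} = kq$; and by Lemma~\ref{l: 32}(2), $\|q-U\|^2 = \tfrac1k - \tfrac1{|V|}$ while $\|f\|^2 = \|\delta_v - U\|^2 = 1 - \tfrac1{|V|}$. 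Lemma~\ref{l: svd alpha}(2) therefore yields
\[
\lambda_2^2 \;\ge\; \frac{\|\alpha(f)\|^2}{\|f\|^2} \;=\; |G_v|^2 k\cdot\frac{|V|-k}{|V|-1}.
\]

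To conclude, suppose $|V|\ge 2k$ (otherwise the second alternative holds and there is nothing to prove); then $\tfrac{|V|-k}{|V|-1} > \tfrac12$, so $\lambda_2^2 > \tfrac12|G_v|^2 k$, and rearranging against $\lambda_1 = |G_v|k$ gives the stated dichotomy. The one delicate point — which I expect to be the main obstacle — is pinning down the constant exactly as stated: the test function $\delta_v - U$ only controls $\|\alpha(\delta_v)\|_2 = |G_v|\sqrt{k}$, which sits a factor $\sqrt{k}$ below $\lambda_1$, so reaching the precise denominator in the statement (rather than a weaker bound of the same shape) may need a sharper test function — for instance $\delta_v - \delta_w$ with $w$ at large distance from $v$, or a signed combination of point masses supported on a well-spread set — or a more careful case analysis near $|V| = 2k$. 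The remaining steps above are routine applications of the identities in Lemmas~\ref{l: svd alpha} and~\ref{l: 32}.
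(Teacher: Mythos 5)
Your argument is essentially the paper's: the same test function $f=\delta_v-U$ fed into Lemma~\ref{l: svd alpha}(2), the same identities from Lemma~\ref{l: 32}, and the same identification $\lambda_1=k|G_v|$, with your exact evaluation $\alpha(\delta_v)=|G_v|\mathbf{1}_{\Gamma(v)}$ replacing the paper's Cauchy--Schwarz lower bound $\tfrac1k\le\|p_S\ast p_v\|^2$ (which is in fact an equality here). Your closing worry about the constant is resolved in your favour: the paper's own proof also only concludes $|G_v|^2<2\lambda_2^2/k$, i.e.\ $|G_v|<\sqrt{2}\,\lambda_2/\sqrt{k}$, so the $k$ in the denominator of the stated proposition appears to be a typo for $\sqrt{k}$, and this weaker bound is all that is needed for Theorem~\ref{t: weiss equivalence}.
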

\begin{proof}
Let $v$ be a vertex in $V$. We define two probability distributions, $p_S: G\to \mathbb{R}$ and $p_v: V\to \mathbb{R}$, as follows:
\[
p_S(x) = 
\begin{cases}
\frac{1}{|S|}, & x\in S, \\
0, & x\notin S,
\end{cases}
\qquad          
p_v(x) = 
\begin{cases}
1, & x=v, \\
0, & otherwise.
\end{cases}
\]          
Observe that $\Vert p_S\Vert = \frac1{\sqrt{|S|}}=\frac 1{\sqrt{k|G_v|}}$ and $\Vert p_\Gamma\Vert = 1$. Observe that $(p_S\ast p_v)(w)=0$ except when $w\in S(\{v\})=\Gamma(v)$. A simple application of the Cauchy-Schwarz inequality (or see \cite[Observation 3.4]{bnp}) gives
\[
\frac1k=\frac{1}{|\Gamma(v)|} \leq \Vert p_S\ast p_\Gamma \Vert^2. 
\]

Define $f=p_v - U$ and observe that $f$ is a function on $V$ that sums to $0$. Lemma~\ref{l: svd alpha} implies that $\|(\alpha f)\|/\|f\| \leq \lambda_2$. Using this fact, the identities in Lemma~\ref{l: 32}, and the fact that $\chi_S = |S| p_S$, we obtain the following:
\begin{equation*}
 \begin{aligned}
  \frac{1}{k} &\leq \Vert p_S\ast p_v \Vert^2 \\
&=\| p_S\ast (f+U)\|^2 \\
&=\|p_S \ast f + U\|^2 \\
&=\|p_S \ast f\|^2 + \frac1{|V|}\\
&=\frac1{|S|^2}\|\chi_S\ast f\|^2 + \frac1{|V|}\\
&=\frac1{|S|^2}\|\alpha(f)\|^2 + \frac1{|V|}\\
&\leq \frac1{|S|^2}\lambda_2^2 \Vert f\Vert^2 + \frac{1}{|V|}\\
&= \frac1{|S|^2}\lambda_2^2 \Vert p_v-U \Vert^2 +\frac{1}{|V|}\\
&< \frac{\lambda_2^2}{|S|^2}+\frac{1}{|V|}.
 \end{aligned}
\end{equation*}
Since $|S|=k|G_v|$ we can rearrange to obtain  
\[
k> \frac{|V|}{1+\frac{\lambda_2^2|V|}{k^2|G_v|^2}}.
\]
Observe that if $\frac{\lambda_2^2|V|}{k^2|G_v|^2}\leq 1$ then 
\[
k>\frac{|V|}{1+\frac{\lambda_2^2|V|}{k^2|G_v|^2}} \geq \frac{|V|}{2}.
\]
and the result follows. On the other hand, if $\frac{\lambda_2^2|V|}{k^2|G_v|^2}> 1$ then 
\[
k> \frac{|V|k^2|G_v|^2}{k^2|G_v|^2 + |V|\lambda_2^2} > \frac{|V|k^2|G_v|^2}{2|V|\lambda_2^2}
\]
and we conclude that $|G_v|^2 < 2\lambda_2^2/k$ as required.
\end{proof}

Finally we can prove Theorem~\ref{t: weiss equivalence}.

\begin{proof}
 The previous lemma implies that if $\lambda_2<f(k)$ for some function $f:\mathbb{N}\to \mathbb{N}$ then $|G_v|<g(k)$ for some function $g:\mathbb{N}\to\mathbb{N}$. (Note that if $|V|\leq 2k$, then $|G_v|\leq |G| \leq (2k)!$.)

For the converse, Lemma~\ref{l: svd alpha} implies that $\lambda_1 = t\sqrt{|X|\cdot |Y|}$ where $t$ is the real number such that every vertex in $X$ has degree $t|Y|$. Now recall that $|X|=|Y|=|V|$ and observe that every vertex in $X$ has degree $k|G_v|$. Thus we conclude that $\lambda_1 = k|G_v|$. Since $\lambda_2\leq \lambda_1$ the result follows.
\end{proof}

\providecommand{\bysame}{\leavevmode\hbox to3em{\hrulefill}\thinspace}
\providecommand{\MR}{\relax\ifhmode\unskip\space\fi MR }
\providecommand{\MRhref}[2]{%
  \href{http://www.ams.org/mathscinet-getitem?mr=#1}{#2}
}
\providecommand{\href}[2]{#2}

\end{document}